\begin{document}

\newcommand{\C}{\mathbb C}
\newcommand{\N}{\mathbb N}
\newcommand{\R}{\mathbb R}

\renewcommand{\Im}{{\mathrm{Im}}}
\renewcommand{\Re}{{\mathrm{Re}}}

\def\qqs{\ \forall \ }
\def \sprd#1,#2,#3{{\left( #1,#2\right)}_{#3}}
\newcommand{\lmd}{{\mathcal L}}

\newtheorem{theo}{Theorem}[section]
\newtheorem{prop}[theo]{Proposition}
\newtheorem{lem}[theo]{Lemma}
\newtheorem{cor}[theo]{Corollary}
\theoremstyle{definition}
\newtheorem{defi}[theo]{Definition}
\theoremstyle{remark}
\newtheorem{rem}[theo]{Remark}

\renewcommand{\cdots}{\dots}

\renewcommand{\theenumi}{\roman{enumi}}
\renewcommand{\labelenumi}{\theenumi)}

\title{Energy flow above the threshold of tunnel effect}

\author{F. Ali Mehmeti}

\address{%
Univ Lille Nord de France, F-59000 Lille, France \newline
\indent
 UVHC, LAMAV, FR CNRS 2956, F-59313 Valenciennes, France}

\email{felix.ali-mehmeti@univ-valenciennes.fr}

\author{R. Haller-Dintelmann}

\address{%
TU Darmstadt \\
Fachbereich Mathematik \\
Schlo{\ss}gartenstra{\ss}e 7\\
64289 Darmstadt\\
Germany}

\email{haller@mathematik.tu-darmstadt.de}

\author{V. R\'egnier}

\address{%
Univ Lille Nord de France, F-59000 Lille, France \newline
\indent
 UVHC, LAMAV, FR CNRS 2956, F-59313 Valenciennes, France}

\email{Virginie.Regnier@univ-valenciennes.fr}

\begin{abstract}
We consider the Klein-Gordon equation on two half-axes connected at
their origins. We add a potential that is constant but different on
each branch. In a previous paper, we studied the $L^{\infty}$-time
decay via H\"ormander's version of the stationary phase method. Here
we apply these results to show that for initial conditions in an
energy band above the threshold of the tunnel effect a fixed portion
of the energy propagates between group lines. Further we consider
the situation that the potential difference tends to infinity while
the energy band of the initial condition is shifted upwards such
that the particle stays above the threshold of the tunnel effect. We
show that the total transmitted energy as well as the portion
between the group lines tend to zero like $a_2^{-1/2}$ in the branch
with the higher potential $a_2$ as $a_2$ tends to infinity. At the
same time the cone formed by the group lines inclines to the
$t$-axis while its aperture tends to zero.
\end{abstract}

\subjclass[2000]{Primary 34B45; Secondary 47A70, 35B40\rm}

\keywords{Networks, Klein-Gordon equation, stationary phase method,
$L^{\infty}$-time decay, energy flow\rm}

\thanks{Parts of this work were done, while the second author visited
the University of Valenciennes. He wishes to express his gratitude to
F. Ali Mehmeti and the LAMAV for their hospitality}

\maketitle

%
%
%
%
\section{Introduction}
%
%
%
%
\noindent In this paper we study the energy flow of waves in
two coupled one-dimensional semi-infinite media having different
dispersion properties.   Results in experimental physics \cite{Nim,
hai.nim}, theoretical physics \cite{D-L} and functional analysis
\cite{Alreg3,yas} describe phenomena created in this situation by
the dynamics of the tunnel effect: the delayed reflection and
advanced transmission near nodes issuing two branches. Our purpose
is to describe the influence of the height of a potential step on
the energy flow of wave packets above the threshold of tunnel
effect.

We consider the following setting: let  $N_1, N_2$ be disjoint
copies of $(0,+ \infty)$. Consider numbers  $a_k$ satisfying   $0
\leq a_1 \leq a_2 < + \infty$. Find a vector $(u_1, u_2)$ of
functions $u_k: [0, +\infty) \times \overline{N_k} \rightarrow \C$
satisfying the Klein-Gordon equations
\[ [\partial_t^2 - \partial_x^2   + a_k ] u_k(t,x) = 0 , \ k = 1,2,
\]
on $N_1,N_2$ coupled at zero by usual Kirchhoff conditions and
complemented with initial conditions for the functions $u_k$ and
their derivatives.

Reformulating this as an abstract Cauchy problem, one is confronted
with the self-adjoint operator $A = (- \partial^2_x + a_1,
-\partial^2_x + a_2)$ in $L^2(N_1) \times L^2(N_2)$, with a domain
that incorporates the Kirchhoff transmission conditions at zero. For
an exact definition of $A$, we refer to Section~\ref{sec:sol:form}.
The problem described above can be reformulated as
\begin{equation} \label{abstract wave equation}
  \begin{array}{rcl}
    \ddot u(t) + A u(t) &=& 0, \\
    u(t) &\in& D(A),
  \end{array}
\end{equation}
for all $t > 0$ together with initial conditions. It is well known that the
following expression is invariant with respect to time for solutions
of \eqref{abstract wave equation}:
\begin{equation}\label{e-total energy}
    E(u(t,\cdot))= \frac{1}{2} \Bigl(\| \dot u(t,\cdot) \|_H^2
    + (Au,u)_H \Bigr).
\end{equation}
In Section~\ref{sec:sol:form} we recall the solution formula that
was proved in \cite{ahr.arxiv2} by an expansion in generalized
eigenfunctions in the more general setting of a star shaped network
with semi-infinite branches.

In Section~\ref{decay} we recall our result on $L^{\infty}$-time
decay proved in \cite{L-infinity-proc-IWOTA}. There we obtained the
exact $L^{\infty}$-time decay rate $c\cdot t^{-1/2}$ in the group
velocity cones together with an expression for the coefficient $c$
for initial conditions in a compact energy band in $(a_2,\infty)$.
In the present work we refine an estimation from below in this
context.

In Section~\ref{acc.decay} we use the preceding results to estimate
the $L^2$-norm of the outgoing solution on $N_2$, which is a part of
the total energy given in \eqref{e-total energy}. We suppose that
the initial condition belongs to an energy band above the threshold
of the tunnel effect, so that the solution propagates in the branch
with the higher potential $N_2$. We consider the $L^2$-norm of the
solution at time $t$ both on the whole branch $N_2$ and inside the
cone delimited by the group lines corresponding to the bounds of the
energy band. It turns out that the first norm has an upper
asymptotic bound and and the second one an upper and lower
asymptotic bound which behave as $a_2^{-1/2}$, if the height of the
potential step $a_2$ tends to infinity. This implies that the ratio
of the energy on the whole branch and the energy between the group
lines is time asymptotically confined in a finite interval above $1$
which is independent and $a_2$.

These results might be interpreted in terms of quantum mechanics as
follows: a relativistic, massive particle without spin in a
one-dimensional world is submitted to a potential step at the
origin. It is supposed to have enough kinetic energy to overcome the
step with a fixed remaining energy. Classically, the particle should
leave the potential step with a velocity which is independent of the
height of the step. Our results show that in the quantum mechanical
model, when the height of the potential step tends to infinity, the
velocity of the outgoing component of the particle tends to zero
while the particle is more and more localized. At the same time the
total outgoing energy tends to zero while its ratio to the energy
inside the cone delimited by the group lines remains time
asymptotically in a constant finite interval above one. Similar
estimates should be possible for the other parts of the total energy
\eqref{e-total energy}.

The last observation might lead to the idea of viewing the total
energy as being subdivided in a part inside the group line cones and
a part outside these cones. The ratio of these two energies is quite
independent of the experimental configuration, but depends mainly on
the chosen energy band.

The results of this paper are related to results in experimental
physics, theoretical physics and functional analysis (spectral
theory, asymptotic estimates, analysis on networks, cf.~%
\cite{L-infinity-proc-IWOTA} and the references cited there). For
example in \cite{Alreg3} we obtained information on the splitting of
the energy flow near zero. In \cite{fam2} a (not optimal) estimation
for the $L^{\infty}$-time decay rate has been obtained but without
any information on the localization of the energy. In \cite{vbl.evl}
the relation of the eigenvalues of the Laplacian in an
$L^{\infty}$-setting on infinite, locally finite networks to the
adjacency operator of the network is studied. In \cite{kostr}, the
authors consider general networks with semi-infinite ends. They give
a construction to compute some generalized eigenfunctions but no
attempt is made to construct explicit inversion formulas.

%
%
%
\section{A solution formula} \label{sec:sol:form}
%
%
%
%
\noindent The aim of this section is to recall the tools we used in
\cite{ahr.arxiv2} as well as the solution formula of the same paper
for a special initial condition and to adapt this formula for the
use of the stationary phase method
in the next section.
\begin{defi}[Functional analytic framework]  \label{def.A} ~
\begin{enumerate}
\item  Let $N_1,N_2$ be
identified with $(0, +\infty)$. Put $N := \overline{N_1} \times
      \overline{N_2}$, identifying the endpoints $0$.
\item Two transmission conditions are introduced:
      \begin{align*}
        \strut \text{($T_0$): } & (u_1, u_2) \in C(\overline{N_1}) \times
               C(\overline{N_2}) \text{ satisfies } u_1(0) = u_2(0).
        \intertext{This condition in particular implies that $(u_1, u_2)$ may
            be viewed as a well-defined function on $N$.}
        \text{($T_1$): } & (u_1, u_2) \in C^1(\overline{N_1}) \times
               C^1(\overline{N_2}) \text{ satisfies } \sum_{k=1}^2
               \partial_x u_k(0^+) = 0.
      \end{align*}
\item Define the Hilbert space $H = L^2(N_1) \times L^2(N_2)$ with scalar
      product
      \[ (u,v)_H = \sum_{k=1}^2 (u_k,v_k)_{L^2(N_k)}
      \]
      and the operator $A: D(A) \longrightarrow H$ by
      \[ \begin{aligned}
            D(A) &= \Bigl\{ (u_1, u_2) \in H^2(N_1) \times H^2(N_2) :
        (u_1,u_2) \text{ satisfies } (T_0) \text{ and }
        (T_1) \Bigr\}, \\
            A(u_1,u_2) &= (A_1u_1,A_2 u_2) = \Bigl(- \partial^2_x
            u_k + a_k u_k
                  \Bigr)_{k=1,2}.
         \end{aligned}
      \]
\end{enumerate}
\end{defi}

Note that, if $a_1 = a_2 =0$, then $A$ is the Laplacian in the sense of the
existing literature, cf.\@ \cite{vbl.evl, kostr}.
%
%
%
\noindent
\begin{defi}[Fourier-type transform $V$]  \label{def.V} ~
\begin{enumerate}
\item For $k = 1,2 $ and $\lambda \in \C$ let
\[ \xi_k(\lambda) := \sqrt{\lambda - a_k} \quad \text{as well as} \quad
    s_1 (\lambda):= - \frac{ \xi_2(\lambda)}{\xi_1(\lambda)} \quad \text{and} \quad
    s_2 (\lambda):= - \frac{ \xi_1(\lambda)}{\xi_2(\lambda)}.
\]
Here, and in all what follows, the complex square root is chosen in such a way
that $\sqrt{r \cdot e^{i \phi}} = \sqrt{r} e^{i \phi/2}$ with $r>0$ and $\phi
\in [-\pi,\pi)$.
\item For $\lambda \in \C$ and $j,k \in \{ 1, 2\}$, we
define generalized eigenfunctions $F_{\lambda}^{\pm,j}: N
\rightarrow \C$ of $A$ by $F_{\lambda}^{\pm,j}(x) :=
F_{\lambda,k}^{\pm,j}(x)$ with
\[ \left\{ \begin{aligned}
       F_{\lambda,k}^{\pm,j}(x) &= \cos(\xi_j( \lambda) x ) \pm i
       s_j( \lambda)   \sin(\xi_j( \lambda) x ), &  \text{for } k = j,\\
      F_{\lambda,k}^{\pm,j}(x) &= \exp(\pm i \xi_k(\lambda)x), &
          \text{for } k \neq j.
   \end{aligned} \right.
\]
for $x \in \overline{N_k}.$
\item For $l = 1, 2$ let
\[ q_l(\lambda) := \begin{cases}
            0, & \text{if } \lambda < a_l, \\
                    \frac{\xi_l(\lambda)}{|\xi_1(\lambda) + \xi_2(\lambda)|^2},
            & \text{if } a_l < \lambda.
                   \end{cases}
\]
\item Considering $q_1$ and $q_2$ as weights for our $L^2$-spaces, we set
$L^2_q := L^2((a_1, + \infty), q_1) \times L^2((a_2, + \infty), q_2)$.
The corresponding scalar product is
\[ (F, G)_q := \sum_{k=1}^2 \int_{(a_k, +\infty)} q_k(\lambda) F_k(\lambda)
    \overline{G_k(\lambda)} \; d \lambda
\]
and its associated norm $|F|_q := (F, F)_q^{1/2}$.
\item For all $f \in L^1(N, \C)$ we define $Vf: [a_1, + \infty) \times
      [a_2 , + \infty) \to \C$ by
\[ (Vf)_k(\lambda):= \int_N f(x) \overline{(F_{\lambda}^{-,k})}(x) \; dx, \ k = 1, 2.
\]
\end{enumerate}
\end{defi}
%
In \cite{ahr.arxiv2}, we show that $V$ diagonalizes $A$ and we
determine a metric setting in which it is an isometry. Let us recall
these useful properties of $V$ as well as the fact that the
property $u \in D(A^j)$ can be characterized in terms of the decay
rate of the components of $Vu$.
\begin{theo} \label{V.iso}
Endow $C_c^\infty(N_1) \times C_c^\infty(N_2)$ with the norm of $H =
L^2(N_1) \times L^2(N_2)$. Then
\begin{enumerate}
\item \label{V.iso:i} $V : C_c^\infty(N_1) \times C_c^\infty(N_2) \to L^2_q$ is
isometric and can be extended to an isometry $\tilde{V} : H \to L^2_q$, which
we shall again denote by $V$ in the following.
\item $V : H \to L^2_q$ is a spectral representation of $H$ with respect to $A$.
In particular, $V$ is surjective.
\item The spectrum of the operator $A$ is $\sigma(A) = [a_1, + \infty)$.
\item For $l \in \N$ the following statements are equivalent:
    \begin{enumerate}
    \item $u \in D(A^l)$,
    \item $\lambda \mapsto \lambda^{l} (Vu)(\lambda) \in L_q^2$,
    \item $\lambda \mapsto \lambda^{l} (Vu)_k(\lambda) \in
    L^2((a_k, + \infty),q_k), \ k=1,2$.
    \end{enumerate}
\end{enumerate}
\end{theo}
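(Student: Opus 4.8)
The plan is to derive all four assertions from a single underlying fact: that $V$ implements a generalized eigenfunction expansion for $A$, so that the family $\{F_\lambda^{-,k}\}$ is complete and orthonormal in the weighted sense encoded by the $q_k$. First I would record that each $F_\lambda^{\pm,j}$ is a genuine generalized eigenfunction of $A$: a direct substitution shows it solves $-\partial_x^2 F + a_k F = \lambda F$ on each branch, and the specific choice $s_j(\lambda) = -\xi_{3-j}(\lambda)/\xi_j(\lambda)$ is exactly what forces the Kirchhoff conditions $(T_0)$ and $(T_1)$ at the origin to hold. This justifies the formal relation $A F_\lambda^{-,k} = \lambda F_\lambda^{-,k}$ that underlies the whole construction.

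For the isometry (i) I would verify the Plancherel identity $|Vf|_q^2 = \|f\|_H^2$ directly on the dense subspace $C_c^\infty(N_1) \times C_c^\infty(N_2)$ and then extend by continuity to obtain $\tilde{V}$. Writing out $|Vf|_q^2$ from the definitions of $V$ and $(\cdot,\cdot)_q$, one is led to a double integral over $N \times N$ against the kernel $\sum_{k} \int_{(a_k,\infty)} q_k(\lambda) F_\lambda^{-,k}(x) \overline{F_\lambda^{-,k}(y)} \, d\lambda$. The change of variables $\lambda = \xi_k(\lambda)^2 + a_k$, i.e.\ passing from $\lambda$ to the wave number $\xi_k$, converts the measure $q_k(\lambda)\,d\lambda$ into a flat measure and reduces each diagonal contribution to the classical half-line Fourier (cosine/sine) Plancherel theorem; the normalisation $q_k = \xi_k/|\xi_1+\xi_2|^2$ is precisely what makes the reflected and transmitted pieces recombine so that the kernel collapses to the Dirac mass $\delta(x-y)$ on $N$.

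Assertions (ii) and (iii) would then follow from the intertwining property. Integrating by parts twice in $(V(Au))_k(\lambda) = \int_N (Au)(x)\overline{F_\lambda^{-,k}(x)}\,dx$ and using that $F_\lambda^{-,k}$ is a generalized eigenfunction, I would show $V(Au) = \lambda \cdot Vu$; here the boundary terms at infinity vanish because $u$ has compact support (or sufficient decay), while the boundary terms at the origin cancel by the transmission conditions $(T_0),(T_1)$ satisfied by both $u$ and the eigenfunctions. Thus $V$ conjugates $A$ to multiplication by $\lambda$ on $L^2_q$. Surjectivity follows since the range of $V$ is closed (isometry) and dense, so $V$ is unitary and a spectral representation, which is (ii). For (iii), the spectrum of $A$ equals that of multiplication by $\lambda$ on $L^2_q$, namely the essential range of $\lambda$ over the support $(a_1,\infty)\times(a_2,\infty)$, which is $[a_1,\infty)$.

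The domain characterization (iv) is then routine: for the unitary $V$ intertwining $A$ with multiplication by $\lambda$, one has $u \in D(A^l)$ if and only if $\lambda^l(Vu) \in L^2_q$, giving (a) $\Leftrightarrow$ (b), while (b) $\Leftrightarrow$ (c) is immediate from the componentwise definition of the norm on $L^2_q$. The main obstacle throughout is the isometry computation in (i): the delicate point is the completeness relation $\sum_k \int q_k(\lambda) F_\lambda^{-,k}(x)\overline{F_\lambda^{-,k}(y)}\,d\lambda = \delta(x-y)$ for the generalized eigenfunctions. Establishing this rigorously is best done either through Stone's formula applied to the explicitly computed resolvent kernel of $A$ — whose jump across $[a_1,\infty)$ produces exactly the weights $q_k$ — or through a careful direct reduction to the half-line Fourier transform in which the cross-branch terms must be tracked and shown to cancel.
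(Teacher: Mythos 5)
First, a caveat on the comparison: the paper does not prove Theorem~\ref{V.iso} at all --- it is recalled verbatim from \cite{ahr.arxiv2}, where it is established in the more general setting of a star-shaped network --- so there is no in-text argument to measure yours against. Judged on its own, your outline follows exactly the route one expects (and that the cited reference takes): check that the $F_\lambda^{\pm,j}$ solve the equation and satisfy the Kirchhoff conditions, prove a Plancherel identity on a dense subspace and extend by continuity, establish the intertwining $V(Au)=\lambda\, Vu$ by two integrations by parts with the boundary terms at the origin cancelling via $(T_0)$ and $(T_1)$, and read off (iii) and (iv) from the resulting multiplication-operator picture. Your verification of the transmission conditions is correct ($F_\lambda^{\pm,j}(0)=1$ on both branches, and $s_j(\lambda)\xi_j(\lambda)+\xi_{3-j}(\lambda)=0$ annihilates the derivative sum), and the deductions of (iii) and (iv) from (i)--(ii) are routine and correctly stated.

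Two points remain genuine gaps rather than deferred routine work. First, the completeness relation $\sum_k\int q_k(\lambda)F_\lambda^{-,k}(x)\overline{F_\lambda^{-,k}(y)}\,d\lambda=\delta(x-y)$ carries the entire analytic content of (i); you correctly identify it as the main obstacle and name the two standard ways to attack it (Stone's formula applied to the explicit resolvent kernel, or reduction to half-line Fourier analysis), but you carry out neither, and the cancellation of the cross-branch terms against the weights $q_k=\xi_k/|\xi_1+\xi_2|^2$ is precisely where such a computation can fail. Second, your surjectivity argument is circular: ``the range of $V$ is closed (isometry) and dense, so $V$ is unitary'' --- density of the range is exactly what has to be proved and does not follow from the isometry (an isometry can easily have a proper closed range). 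One needs either the dual orthogonality relation $\int_N F_\lambda^{-,k}(x)\overline{F_{\lambda'}^{-,k'}(x)}\,dx=q_k(\lambda)^{-1}\delta_{kk'}\delta(\lambda-\lambda')$, i.e.\ $VV^*=I$ on $L^2_q$, or an explicit inversion formula for $V$; the latter is what \cite{ahr.arxiv2} provides and what the present paper actually invokes in the proof of Theorem~\ref{solform.lambda}.
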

%
We are now interested in the Abstract Cauchy Problem
\[ \hbox{(ACP)}: u_{tt}(t) + Au(t) = 0, \ t > 0, \textrm{ with } u(0)=u_0,\ u_t(0)=0.
\]
Here, the zero initial condition for the velocity is just chosen for
simplicity, as we will not deal with the general case in this contribution.

By the surjectivity of $V$ (cf.\@ Theorem~\ref{V.iso}~(ii))  there
exists an initial condition $u_0 \in H$ satisfying

\medskip

\noindent {\bf Condition} ${\mathbf(A)}$:
  $(V u_0)_2 \equiv 0$ and $(V u_0)_1 \in C^2_c ((a_2, \infty))$.

\medskip

\begin{rem} \label{D A infty}
\begin{enumerate}
  \item For $u_0$ satisfying $(A)$ there exist
  $a_2 < \lambda_{\min} < \lambda_{\max}< \infty$
  such that
  \[\hbox{ supp} (V u_0)_1 \subset [\lambda_{\min} , \lambda_{\max}].\]
  \item
  If $u_0 \in H$ satisfies $(A)$, then
  $u_0 \in D(A^{\infty})= \displaystyle \bigcap_{l \geq 1} D(A^l)$,
  due to Theorem~\ref{V.iso}~(iv),
  since
  $\lambda \mapsto \lambda^{l} (Vu)_m(\lambda) \in
    L^2((a_m, + \infty),q_m), \ m=1,2$
  for all $l \in \N$ by the compactness of
  $\hbox{ supp} (V u_0)_m$.
\end{enumerate}
\end{rem}
%
%
\begin{theo}[Solution formula of (ACP) in a special case]
\label{solform.lambda}
Suppose that $u_0$ satisfies Condition $(A)$. Then there exists a
unique solution $u$ of $(ACP)$ with
$u \in C^{l}([0, + \infty), D(A^{m/2}))$
for all $l,m \in \N$. For $x \in N_2$ we have the representation
\begin{equation*}
u_2(t,x) = \frac{1}{2} \bigl( u_+(t,x) + u_-(t,x) \bigr)
\end{equation*}
with
\begin{equation} \label{representation}
u_\pm(t,x):=\displaystyle\int_{\lambda_{\min}}^{\lambda_{\max}}
e^{\pm i \sqrt{\lambda}t} q_1(\lambda) e^{-i \xi_2(\lambda)x}
(Vu_0)_1(\lambda) d \lambda.
\end{equation}
\end{theo}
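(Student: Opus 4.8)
The plan is to solve the abstract Cauchy problem in the spectral representation furnished by Theorem~\ref{V.iso} and then to read off the explicit formula by applying the inverse transform. Since $V$ diagonalizes $A$, applying $V$ to $(ACP)$ turns $A$ into multiplication by $\lambda$, so that $\hat u(t) := V u(t)$ should satisfy, for a.e.\ $\lambda$, the scalar ODE $\partial_t^2 \hat u(t,\lambda) + \lambda\, \hat u(t,\lambda) = 0$ with $\hat u(0,\lambda) = (Vu_0)(\lambda)$ and $\partial_t \hat u(0,\lambda) = 0$. Because $\sigma(A) = [a_1,+\infty) \subset [0,+\infty)$, the quantity $\sqrt\lambda$ is real on the spectrum and the unique solution of this ODE is $\hat u(t,\lambda) = \cos(\sqrt\lambda\, t)\,(Vu_0)(\lambda)$. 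I would therefore define the candidate solution by $u(t) := V^{-1}\big[\cos(\sqrt\lambda\, t)\,(Vu_0)\big]$, which makes sense since $V$ is a surjective isometry by Theorem~\ref{V.iso}, hence invertible.

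Next I would establish existence, uniqueness and the claimed regularity. By Remark~\ref{D A infty} we have $u_0 \in D(A^\infty)$ and $\mathrm{supp}\,(Vu_0)_1 \subset [\lambda_{\min},\lambda_{\max}]$ with $a_2 < \lambda_{\min}$. Since $|\cos(\sqrt\lambda\, t)| \le 1$ and each $t$-derivative produces at most a factor $\lambda^{1/2}$ times a bounded trigonometric term, the function $\lambda \mapsto \lambda^{m/2}\,\partial_t^l\big[\cos(\sqrt\lambda\,t)(Vu_0)(\lambda)\big]$ stays in $L^2_q$ uniformly for $t$ in compact intervals, the decisive point being that the compact support of $Vu_0$ inside $(a_2,\infty)$ keeps all these factors bounded. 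Differentiating under $V^{-1}$ in the vector-valued sense (justified by dominated convergence for the difference quotients in the $D(A^{m/2})$-norm, again using compact support) and invoking the characterisation of $D(A^l)$ in Theorem~\ref{V.iso}~(iv), extended to half-integer powers through the functional calculus of the nonnegative self-adjoint operator $A$, yields $u \in C^l([0,+\infty),D(A^{m/2}))$ for all $l,m \in \N$. Uniqueness follows because $V$ maps any solution of $(ACP)$ to a solution of the above scalar ODE, which is unique for the given initial data.

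It then remains to compute the second component on $N_2$. The spectral representation comes with the inversion formula
\[
 (V^{-1}F)(x) = \sum_{k=1}^2 \int_{(a_k,+\infty)} q_k(\lambda)\, F_k(\lambda)\, F_\lambda^{-,k}(x)\; d\lambda,
\]
which expresses $V^{-1}$ as the expansion in the generalized eigenfunctions of Definition~\ref{def.V}. Applying it to $F = \cos(\sqrt\lambda\, t)\,(Vu_0)$ and using Condition $(A)$, namely $(Vu_0)_2 \equiv 0$, annihilates the $k=2$ summand, so that only the $k=1$ term survives, integrated over $[\lambda_{\min},\lambda_{\max}]$. Evaluating the second component of the surviving generalized eigenfunction via Definition~\ref{def.V}~(ii) in the case $k=2 \neq j=1$ gives $F_{\lambda,2}^{-,1}(x) = e^{-i\xi_2(\lambda)x}$, whence
\[
 u_2(t,x) = \int_{\lambda_{\min}}^{\lambda_{\max}} q_1(\lambda)\cos(\sqrt\lambda\, t)\, e^{-i\xi_2(\lambda)x}\,(Vu_0)_1(\lambda)\; d\lambda.
\]
Writing $\cos(\sqrt\lambda\, t) = \tfrac12\big(e^{i\sqrt\lambda\, t}+e^{-i\sqrt\lambda\, t}\big)$ then splits this into $\tfrac12(u_+ + u_-)$ with $u_\pm$ exactly as in \eqref{representation}.

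The main obstacle I expect is the rigorous passage from the $L^2$-valued identity $u(t) = V^{-1}\big[\cos(\sqrt\lambda\,t)Vu_0\big]$ to the pointwise-in-$x$ formula for $u_2(t,x)$: a priori $V^{-1}$ only returns an equivalence class in $H$, so one must argue that the surviving integral converges absolutely and defines a genuine, indeed smooth, function of $x$. This is precisely where the compact support of $(Vu_0)_1$ strictly inside $(a_2,\infty)$ is essential, since on $[\lambda_{\min},\lambda_{\max}]$ the weight $q_1$, the factor $e^{-i\xi_2(\lambda)x}$ and $\sqrt\lambda$ stay bounded, and $\xi_2(\lambda) = \sqrt{\lambda - a_2}$ is real and smooth there; consequently differentiation under the integral in both $t$ and $x$ is legitimate and the representation holds in the classical sense.
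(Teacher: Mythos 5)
Your proposal is correct and follows essentially the same route as the paper: the authors also write the solution as $u(t)=V^{-1}\cos(\sqrt{\lambda}\,t)Vu_0$ (citing \cite{fam2} for this representation) and then insert the eigenfunction-expansion formula for $V^{-1}$ from \cite{ahr.arxiv2}, which is exactly the inversion formula you reconstructed. You simply supply in detail the steps the paper delegates to those references (the scalar ODE in the spectral picture, the regularity via compact support of $(Vu_0)_1$, and the pointwise justification of the integral), and your guessed form of $V^{-1}$ and of $F_{\lambda,2}^{-,1}$ is consistent with Definition~\ref{def.V}.
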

%
\begin{proof}
Since $v_0 = u_t(0) = 0$, we have for the solution of (ACP) the
representation
\begin{equation*} 
u(t)=V^{-1} \cos(\sqrt{\lambda}t)Vu_0.
\end{equation*}
(cf.~for example  \cite[Theorem 5.1]{fam2}). The expression for
$V^{-1}$ given in \cite{ahr.arxiv2} yields the formula for $u_\pm$.
\end{proof}
%
%
\begin{rem}
A solution formula for arbitrary initial conditions which is valid
on all branches is available in \cite{ahr.arxiv2}. This general
expression is not needed in the following.
\end{rem}
%
%
%
%
%
\section{$L^{\infty}$-time decay}
\label{decay}

Next, we quote the result on $L^\infty$-time decay of solutions to the
problem (ACP) from \cite{L-infinity-proc-IWOTA}. Here, we only consider
special initial conditions that are localized in energy in a compact
interval contained in $(a_2, a_2 + 1)$. For these it is possible to
give very explicit estimates for all the constants that appear in a
asymptotic expansion of the solution to the order $t^{1/2}$.

\begin{theo} \label{sol.acc.decay}
 Let $0 < \alpha < \beta < 1$ and $\psi \in C^2_c((\alpha, \beta))$ with $\|
 \psi \|_\infty = 1$ be given. Setting $\tilde \psi (\lambda) := \psi(\lambda -
 a_2)$, we choose the initial condition $u_0 \in H$ satisfying $(V u_0)_2 \equiv
 0$ and $(V u_0)_1 = \tilde \psi$. Furthermore, let $u_+$ be defined as in
 Theorem~\ref{solform.lambda}.

 Then there is a constant $C(\psi, \alpha, \beta)$ independent of $a_1$ and
 $a_2$, such that for all $t \in \R^+$ and all $x \in N_2$ with
 \begin{equation} \label{cone-t-x}
  \sqrt{\frac{a_2 + \beta}{\beta}} \le \frac tx \le
    \sqrt{\frac{a_2 + \alpha}{\alpha}}
 \end{equation}
we have
 \[ \bigl| u_+(t,x) - H(t,x,u_0) \cdot t^{-1/2} \bigr| \le C(\psi, \alpha,
    \beta) \cdot t^{-1},
 \]
where
\begin{equation*} 
 H (t,x,u_0):=
e^{-i  \varphi (p_0,t,x)}
(2i\pi)^{1/2}a_2^{3/4}  \  h_1(t,x) \ h_2(t,x)\ (Vu_0)_1(a_2+ p_0^2)
\end{equation*}
with
\begin{align*}
  \varphi (p, t, x):= \sqrt{a_2 + p^2} t - p x, & \qquad
    p_0 := \sqrt{\frac{a_2 x^2}{ (  t^2 - x^2)}}, \\
  h_1(t,x):= \left( \frac{\left( t/x \right)^2}{  \left( t/x
\right)^2 -1 } \right)^{3/4}, & \qquad
h_2(t,x):= \frac{\sqrt{ (a_2 - a_1) \left(  (t/x)^2 - 1 \right) +
a_1}} {\Bigl(  \sqrt{ (a_2 - a_1) \left(   (t/x)^2 - 1 \right) +
a_2} + \sqrt{a_2}\Bigr)^2}.
\end{align*}
It holds further
 \[ \bigl| H(t,x,u_0) \bigr| \le
g(a_1,a_2,\beta) :=
 \sqrt{2\pi}
    \frac{\sqrt{\beta} (a_2 + \beta)^{3/4}}
    {\sqrt{a_2} \sqrt{a_2 - a_1 + \beta}}
\sim \sqrt{2 \pi \beta} \ a_2^{-1/4} \quad \text{as } a_2 \to
+\infty.
 \]
\end{theo}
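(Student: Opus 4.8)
The statement splits into two essentially independent parts: the asymptotic profile $H$ together with the $t^{-1}$ remainder, and the closed-form bound $|H|\le g$ with its rate as $a_2\to+\infty$. For the first part the plan is to put \eqref{representation} into standard oscillatory form and invoke H\"ormander's stationary phase theorem, exactly as in \cite{L-infinity-proc-IWOTA}. Substituting $p=\xi_2(\lambda)=\sqrt{\lambda-a_2}$, so that $\lambda=a_2+p^2$ and $d\lambda=2p\,dp$, turns $u_+$ into $\int e^{i\varphi(p,t,x)}a(p)\,dp$ with phase $\varphi(p,t,x)=\sqrt{a_2+p^2}\,t-px$ and amplitude $a(p)=2p\,q_1(a_2+p^2)(Vu_0)_1(a_2+p^2)$. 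The stationary point solves $\partial_p\varphi=\frac{pt}{\sqrt{a_2+p^2}}-x=0$, i.e.\ $p_0=\sqrt{a_2x^2/(t^2-x^2)}$, and $\partial_p^2\varphi(p_0)=a_2t\,(a_2+p_0^2)^{-3/2}>0$, whence $(\partial_p^2\varphi(p_0))^{-1/2}=a_2^{1/4}h_1(t,x)\,t^{-1/2}$ after using $(t/x)^2-1=a_2/p_0^2$. The leading term, of order $t^{-1/2}$ with an $O(t^{-1})$ remainder controlled by $C^2_c$-norms of the amplitude, then has precisely the form $H(t,x,u_0)\,t^{-1/2}$; I would simply cite this computation and only record the location of $p_0$, noting that the cone condition \eqref{cone-t-x} is exactly the requirement $p_0^2\in(\alpha,\beta)$, i.e.\ that $p_0^2$ lies in the support of $\psi$.

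The second, and genuinely new, part is the estimate of $H$. Here the plan is to bound $|H|$ factor by factor. Since $|e^{-i\varphi(p_0)}|=1$, $|(2i\pi)^{1/2}|=\sqrt{2\pi}$, and $(Vu_0)_1(a_2+p_0^2)=\tilde\psi(a_2+p_0^2)=\psi(p_0^2)$ with $\|\psi\|_\infty=1$, one gets $|H|\le\sqrt{2\pi}\,a_2^{3/4}h_1h_2$. The first step is the exact identity $a_2^{3/4}h_1=(a_2+p_0^2)^{3/4}$, which follows again from $(t/x)^2-1=a_2/p_0^2$. Writing $s:=p_0^2$ and substituting $(t/x)^2-1=a_2/s$ into $h_2$, I would then overestimate $h_2$ by discarding the nonnegative summand $\sqrt{a_2}$ in its denominator and enlarging $a_1s$ to $a_2s$ under the remaining square root (legitimate since $a_1\le a_2$). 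A short computation reduces $a_2^{3/4}h_1h_2$ to $G(s):=a_2^{-1/2}(a_2+s)^{3/4}s^{1/2}(a_2-a_1+s)^{-1/2}$, so that $|H|\le\sqrt{2\pi}\,G(s)$.

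It then remains to maximise $G$ over $s\in(\alpha,\beta)$. The key observation is that $\frac{d}{ds}\log G=\frac{3}{4(a_2+s)}+\frac{1}{2s}-\frac{1}{2(a_2-a_1+s)}$ is positive, because $a_1\le a_2$ forces $a_2-a_1+s\ge s$ and hence $\frac{1}{2s}-\frac{1}{2(a_2-a_1+s)}\ge0$. Thus $G$ is strictly increasing and $G(s)\le G(\beta)=g/\sqrt{2\pi}$, which yields $|H|\le g$. For the final asymptotics I would fix $a_1$ and let $a_2\to+\infty$, so that $(a_2+\beta)^{3/4}\sim a_2^{3/4}$ and $\sqrt{a_2-a_1+\beta}\sim a_2^{1/2}$, giving $g\sim\sqrt{2\pi}\,\sqrt{\beta}\,a_2^{3/4-1/2-1/2}=\sqrt{2\pi\beta}\,a_2^{-1/4}$.

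The main obstacle is not the stationary phase step, which is quoted, but making the two simplifications in $h_2$ uniformly in the correct (upper-bound) direction and then locating the supremum of the resulting closed form at the endpoint $s=\beta$ rather than in the interior of $(\alpha,\beta)$; this is precisely what the sign of $\frac{d}{ds}\log G$ decides, and it is exactly here that the hypothesis $a_1\le a_2$ enters in an essential way. A secondary point worth checking is that these crude bounds stay asymptotically sharp at $s=\beta$ — both discarded quantities are of lower order in $a_2$ there — so that $g$ not only dominates $|H|$ but carries the announced rate $a_2^{-1/4}$.
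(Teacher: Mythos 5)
Your proposal is correct, but it is considerably more self-contained than the paper's own proof, which consists of a single citation: both the asymptotic expansion with the $t^{-1}$ remainder and the bound $|H|\le g$ are simply quoted from Equation (8) and Theorem 4.1 of \cite{L-infinity-proc-IWOTA}. For the stationary phase part you do essentially the same thing (set up the substitution $p=\xi_2(\lambda)$, identify $p_0$, $\varphi''(p_0)=a_2t(a_2+p_0^2)^{-3/2}$, and defer the H\"ormander estimate to the reference), and your identification of the cone condition \eqref{cone-t-x} with $p_0^2\in[\alpha,\beta]$ and the identity $a_2^{3/4}h_1=(a_2+p_0^2)^{3/4}$ are both exact. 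The genuinely new content is your derivation of $|H|\le g$: writing $s=p_0^2$, discarding $\sqrt{a_2}$ in the denominator of $h_2$ and enlarging $a_1s$ to $a_2s$ in its numerator gives $h_2\le \sqrt{s}/(\sqrt{a_2}\sqrt{a_2-a_1+s})$, hence $a_2^{3/4}h_1h_2\le G(s)$, and the monotonicity of $G$ via $\frac{d}{ds}\log G>0$ (using $a_1\le a_2$) locates the supremum at $s=\beta$, which is exactly $g/\sqrt{2\pi}$. This checks out and is a useful reconstruction of what the cited Theorem 4.1 must contain; it buys the reader a verifiable argument where the paper offers only a pointer. Two small caveats: the final asymptotics $g\sim\sqrt{2\pi\beta}\,a_2^{-1/4}$ indeed requires $a_1$ to stay fixed (or at least $a_1=o(a_2)$), as you note, since for $a_1$ comparable to $a_2$ the factor $\sqrt{a_2-a_1+\beta}$ degenerates; and the sign $e^{-i\varphi(p_0)}$ in $H$ versus the $e^{+i\varphi}$ in your oscillatory integral is a convention inherited from the cited paper that is immaterial for the modulus bound but worth flagging if one were to fully rederive the leading term.
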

\begin{proof}
This is contained in Equation (8) from the proof of Theorem~3.2 and in
Theorem~4.1 in \cite{L-infinity-proc-IWOTA}.
\end{proof}

\begin{rem}
\begin{enumerate}
\item
Note that \eqref{cone-t-x} is equivalent to
\begin{equation*} 
v_{\min} \leq v(t,x):= (t/x)^2 -1 \leq v_{\max} \ .
\end{equation*}
where $v_{\min}:= \dfrac{a_2}{\lambda_{\max} - a_2} =
\dfrac{a_2}{\beta}$ and $v_{\max}:= \dfrac{a_2}{\lambda_{\min} -
a_2} = \dfrac{a_2}{\alpha}$.
\item
For later use we define $p_{\min}:= \xi_2(\lambda_{\min})$ and
$p_{\max}:= \xi_2(\lambda_{\max})$.
\end{enumerate}
\end{rem}

%
\section{Energy flow}
\label{acc.decay}
\noindent

In this section we use the asymptotic expansion from section
\ref{decay} to estimate the outgoing solution from above and below
in the cones given by the group velocities corresponding to the
bounds of the energy band of the initial condition. This leads to
time independent asymptotic upper  (Theorem \ref{energy in cone
upper}) and lower (Theorem \ref{H lower estimate}) estimates of the
$L^2$-norm of the solution on the space interval inside the cones.

Further an upper estimate of the $L^2$-norm of the outgoing solution
on the whole branch $N_2$ is obtained using  Plancherel's theorem
(Theorem \ref{energy on branch}).

These informations lead to our main result (Theorem \ref{energy in
cone}), where we give an upper bound for the ratio of the $L^2$-norm
on the whole branch $N_2$ and the $L^2$-norm inside the cone, which
is asymptotically independent of the height of the potential step.

\begin{theo} \label{H lower estimate}
In the setting of Theorem \ref {sol.acc.decay} suppose that
$\psi(\mu) \ge m >0$ for $\mu \in [\alpha',\beta']$ with $\alpha <
\alpha' < \beta' < \beta$. Then we have
\begin{enumerate}
  \item the lower estimate for the coefficient of $t^{-1/2}$:

\begin{align*}
\bigl| H(t,x,u_0) \bigr| &\ge  f(a_2,\alpha,\beta)\cdot m \\
&: = \sqrt{2 \pi } \ a_2^{3/4}
\Bigl(\frac{\beta}{a_2}+1\Bigr)^{3/4} \frac{1}{\sqrt{a_2}}
\sqrt{\frac{a_2 - a_1}{\alpha} + 1}
\Biggl( \sqrt{\frac{a_2 - a_1}{\alpha} + 1} + 1 \Biggr)^{-2}
m.\\
&\sim \sqrt{2\pi\alpha} \ a_2^{-1/4} m ,
 \hbox{ as } a_2 \rightarrow \infty
\end{align*}
for all $(t,x)$ satisfying
 \[ \sqrt{\frac{a_2 + \beta'}{\beta'}} \le \frac tx \le
    \sqrt{\frac{a_2 + \alpha'}{\alpha'}} \ .
\]
  \item and the lower estimate for the solution
 \[
\forall \varepsilon > 0 \ \exists t_0 > 0 \ \forall t > t_0 \quad
\bigl| u_+(t,x) \bigr| \ge \bigl(f(a_2,\alpha,\beta\bigr) m -
\varepsilon) \ t^{-1/2}
 \]
for all $(t,x)$ satisfying
 \[ \sqrt{\frac{a_2 + \beta'}{\beta'}} \le \frac tx \le
    \sqrt{\frac{a_2 + \alpha'}{\alpha'}} \ .
 \]

\end{enumerate}

\end{theo}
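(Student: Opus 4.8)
The plan is to deduce both lower estimates directly from the asymptotic expansion in Theorem~\ref{sol.acc.decay}, treating the two parts as a clean computation followed by a routine $\eps$-argument. For part~(i), I would start from the explicit formula for $H(t,x,u_0)$ and exploit the fact that $(Vu_0)_1 = \tilde\psi$, so that $|(Vu_0)_1(a_2+p_0^2)| = |\psi(p_0^2 - \dots)|$ can be bounded below by $m$ once I verify that the argument $a_2 + p_0^2$, equivalently the parameter $\mu = \lambda - a_2$, lies in the subinterval $[\alpha',\beta']$ where $\psi \ge m$. The key geometric check is that the tighter cone condition $\sqrt{(a_2+\beta')/\beta'} \le t/x \le \sqrt{(a_2+\alpha')/\alpha'}$ translates, via the relation $v(t,x) = (t/x)^2 - 1$ and the definition $p_0 = \sqrt{a_2 x^2/(t^2-x^2)}$, into $p_0^2 = a_2/v(t,x)$ lying in $[\alpha',\beta']$; this is exactly the same change of variables recorded in the Remark following Theorem~\ref{sol.acc.decay}, now applied with primed endpoints.

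Next I would bound the remaining factors $h_1(t,x)$ and $h_2(t,x)$ from below over the primed cone. Since these are monotone in $t/x$, their extrema are attained at the cone boundaries, and I expect the worst case to occur at the endpoint $t/x = \sqrt{(a_2+\alpha')/\alpha'}$, i.e.\ at $v = a_2/\alpha'$. Substituting $v_{\max}' = a_2/\alpha'$ into $h_1$ gives the factor $\bigl((v+1)/v\bigr)^{3/4}$ evaluated there, and into $h_2$ gives the expression involving $\sqrt{(a_2-a_1)/\alpha' + 1}$ in the numerator and $\bigl(\sqrt{(a_2-a_1)/\alpha' + 1} + \dots\bigr)^{-2}$ in the denominator. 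Collecting the constant $(2i\pi)^{1/2}a_2^{3/4}$ from $H$ together with these boundary values yields precisely $f(a_2,\alpha,\beta)\cdot m$; the stated form uses $\alpha$ rather than $\alpha'$, so I would either absorb the difference by noting the bound holds with $\alpha'$ and then relabel, or argue that the displayed $f$ is a valid lower bound on the whole primed range. The asymptotics $f \sim \sqrt{2\pi\alpha}\,a_2^{-1/4}m$ as $a_2 \to \infty$ then follow by expanding each factor: $a_2^{3/4}\cdot a_2^{-1/2} = a_2^{1/4}$, the $h_2$ numerator grows like $a_2^{1/2}/\sqrt{\alpha}$ while its squared denominator grows like $a_2/\alpha$, producing the net $a_2^{-1/2}$ and hence the overall $a_2^{-1/4}$, with the $\sqrt{\alpha}$ surviving.

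For part~(ii) I would invoke the error bound from Theorem~\ref{sol.acc.decay}, namely $|u_+(t,x) - H(t,x,u_0)t^{-1/2}| \le C\,t^{-1}$, and combine it with the reverse triangle inequality to get $|u_+(t,x)| \ge |H(t,x,u_0)|\,t^{-1/2} - C\,t^{-1} \ge f(a_2,\alpha,\beta)m\,t^{-1/2} - C\,t^{-1}$. Given $\eps > 0$, choosing $t_0 := C/\eps$ forces $C\,t^{-1} \le \eps\,t^{-1/2}$ for all $t > t_0$, which yields the claimed bound $|u_+(t,x)| \ge (f(a_2,\alpha,\beta)m - \eps)\,t^{-1/2}$ uniformly over the primed cone. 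I would note that $t_0$ depends only on $\eps$ and the fixed constant $C(\psi,\alpha,\beta)$, consistent with the quantifier order in the statement.

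The main obstacle is not any single hard estimate but the bookkeeping in the change of variables and the careful tracking of where each factor attains its extremum: I must confirm that the primed cone condition genuinely maps $p_0^2$ into $[\alpha',\beta']$ so that the hypothesis $\psi \ge m$ applies, and I must correctly identify which endpoint minimizes the product $h_1 h_2\,a_2^{3/4}$ so that the explicit $f(a_2,\alpha,\beta)$ is a true lower bound rather than an over-optimistic one. A subtle point is the apparent mismatch between the primed interval $[\alpha',\beta']$ controlling $\psi$ and the unprimed $\alpha,\beta$ appearing in $f$; I expect the resolution is that the monotone factors are bounded using the \emph{wider} unprimed cone endpoints (which is legitimate since the primed cone is contained in it), so that $f$ stays independent of the auxiliary choice of $\alpha',\beta'$. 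Everything else is a direct substitution into the formula already furnished by Theorem~\ref{sol.acc.decay}.
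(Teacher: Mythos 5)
Your proposal is correct and follows essentially the same route as the paper: write out $H$ explicitly, use the primed cone condition to place $p_0^2=a_2/v(t,x)$ in $[\alpha',\beta']$ so that $|(Vu_0)_1(a_2+p_0^2)|=|\psi(p_0^2)|\ge m$, bound $h_1$ and $h_2$ from below by monotonicity at the cone boundary (the paper likewise uses that $b\mapsto b/(b+c)^2$ decreases and evaluates at $v_{\max}=a_2/\alpha$, which is exactly your ``relax to the wider unprimed range'' resolution), and then apply the reverse triangle inequality with the $C(\psi,\alpha,\beta)\,t^{-1}$ error term for part (ii). The only slip is cosmetic: $C\,t^{-1}\le\eps\,t^{-1/2}$ requires $t_0=(C/\eps)^2$ rather than $C/\eps$, which does not affect the argument.
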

\begin{proof}
 Note that it is always possible to choose the initial condition in the
 indicated way, thanks to the surjectivity of $V$,
 cf.\@ Theorem~\ref{V.iso}~ii).

\noindent \underline{(i)}:

\noindent
 Theorem \ref{sol.acc.decay} implies
 \[ \bigl| H(t,x,u_0) \bigr| = \sqrt{2\pi} a_2^{3/4} h_1(t,x) \bigl| h_2(t,x)
    \bigr| \cdot \| (Vu_0)_1 \|_\infty.
 \]
We estimate
 \begin{align*}
  \bigl| h_2(t,x) \bigr| &= \biggl|
    \frac{\sqrt{(a_2 - a_1) ((t/x)^2 - 1) + a_2}}%
    {\bigl( \sqrt{(a_2 - a_1) ((t/x)^2 - 1) + a_2} + \sqrt{a_2} \bigr)^2}
    \biggr| \\
    &\ge
    \frac{\sqrt{(a_2 - a_1) v_{\max} + a_2}}%
    {\bigl( \sqrt{(a_2 - a_1) v_{\max} + a_2} + \sqrt{a_2} \bigr)^2}
    \\
    &\ge
    \frac{\sqrt{(a_2 - a_1) \frac{a_2}{\alpha} + a_2}}%
    {\bigl( \sqrt{(a_2 - a_1) \frac{a_2}{\alpha} + a_2} + \sqrt{a_2} \bigr)^2}
    \\
    &= \frac{1}{\sqrt{a_2}}
    \frac{\sqrt{  \frac{a_2 - a_1}{\alpha} + 1}}%
    {\bigl( \sqrt{  \frac{a_2 - a_1}{\alpha} + 1} + 1 \bigr)^2}
    \\
&\sim
\frac{\sqrt{\alpha}}{a_2}  \hbox{ as } a_2 \rightarrow \infty
 \end{align*}
Here we used, that $b\mapsto \frac{b}{(b+c)^2}$ is decreasing for
$b > c \geq 0$.
Further
\begin{align*}
     \bigl| h_1(t,x) \bigr| &\geq\biggl(
\frac{
\frac{\lambda_{\max}}{\lambda_{\max}-a_2}
 }{
\frac{\lambda_{\max}}{\lambda_{\max}-a_2} -1
}
     \biggr)^{3/4}
\ = \ (\frac{\beta}{a_2}+1)^{3/4} \rightarrow 1 \hbox{ as } a_2
\rightarrow \infty
\end{align*}
This implies (i).

\noindent \underline{(ii)}:

\noindent

Using the lower triangular inequality we find that $\forall
\varepsilon > 0 \ \exists t_0 > 0 \ \forall t > t_0  :$
\begin{align*}
\bigl| u_+(t,x) \bigr| &\ge
\bigl| u_+(t,x) - H(t,x,u_0) \cdot t^{-1/2} + H(t,x,u_0) \cdot
t^{-1/2}\bigr| \\
&\geq
\Bigl|\bigl|H(t,x,u_0)\bigr| \cdot t^{-1/2}
-\bigl|u_+(t,x) - H(t,x,u_0) \cdot t^{-1/2}\bigr| \Bigr|
\\
&\geq
\bigl(f(a_2,\alpha,\beta\bigr) m - \varepsilon) \ t^{-1/2}
\end{align*}

for all $(t,x)$ satisfying
 \[ \sqrt{\frac{a_2 + \beta'}{\beta'}} \le \frac tx \le
    \sqrt{\frac{a_2 + \alpha'}{\alpha'}} \ .
 \]
This shows (ii)
\end{proof}

%
%
\begin{theo} \label{energy on branch}
Suppose the setting of Theorem \ref {sol.acc.decay}.  Then we have
\begin{equation*}
\| u_+(t,\cdot) \|_{L^2(N_2)}
\leq
\frac{\sqrt{\beta}}{\sqrt{a_2-a_1+\alpha} \sqrt[4]{\alpha}}
\|\psi\|_{L^2((\alpha, \beta))}
\sim
\frac{\sqrt{\beta}}{\sqrt[4]{\alpha}}
\|\psi\|_{L^2((\alpha, \beta))} a_2^{-1/2}
\hbox{ as } a_2 \rightarrow \infty
\end{equation*}

\end{theo}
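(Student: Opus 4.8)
The plan is to identify $u_+(t,\cdot)$, as a function of the spatial variable $x$ on $N_2$, with the restriction to the half-axis of an ordinary Fourier transform, and then to invoke Plancherel's theorem. In the representation \eqref{representation} I would substitute $p := \xi_2(\lambda) = \sqrt{\lambda - a_2}$, so that $\lambda = a_2 + p^2$, $d\lambda = 2p\,dp$, and $p$ runs over $[p_{\min},p_{\max}] = [\sqrt\alpha,\sqrt\beta]$. This turns the $x$-dependence $e^{-i\xi_2(\lambda)x}$ into the Fourier kernel $e^{-ipx}$ and yields
\[
u_+(t,x) = \int_{\R} g_t(p)\,e^{-ipx}\,dp, \qquad g_t(p) := 2p\,q_1(a_2+p^2)\,e^{i\sqrt{a_2+p^2}\,t}\,\tilde\psi(a_2+p^2)\,\mathbf 1_{[p_{\min},p_{\max}]}(p),
\]
a function of $x$ on all of $\R$ whose Fourier weight $g_t$ is supported in $(0,\infty)$. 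On the support $\lambda>a_2\ge a_1$, both $\xi_1,\xi_2$ are real and positive, so $q_1>0$ is real; since also $|e^{i\sqrt{a_2+p^2}t}|=1$, the modulus $|g_t(p)| = 2p\,q_1(a_2+p^2)\,|\psi(p^2)|$ is independent of $t$, which is precisely the source of the time-independence of the bound.

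Next I would pass from the half-axis to the full line and apply Plancherel. Because $N_2 = (0,\infty)\subset\R$, one has $\|u_+(t,\cdot)\|_{L^2(N_2)}\le\|u_+(t,\cdot)\|_{L^2(\R)}$, and this single inequality is what makes the assertion an upper estimate. Plancherel's theorem expresses $\|u_+(t,\cdot)\|_{L^2(\R)}^2$ as a multiple of $\int_\R|g_t(p)|^2\,dp$; substituting back $\mu = p^2$ (so $\mu\in(\alpha,\beta)$ and $\tilde\psi(a_2+\mu) = \psi(\mu)$) converts this into a weighted integral of the form
\[
\|u_+(t,\cdot)\|_{L^2(N_2)}^2 \le \int_\alpha^\beta \sqrt\mu\,q_1(a_2+\mu)^2\,|\psi(\mu)|^2\,d\mu,
\]
the normalization being dictated by the isometry properties of $V$ recalled in Theorem~\ref{V.iso}.

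It then remains to estimate the weight $\sqrt\mu\,q_1(a_2+\mu)^2$ pointwise on $[\alpha,\beta]$ and factor out $\|\psi\|_{L^2((\alpha,\beta))}^2$. Using $q_1(a_2+\mu) = \sqrt{a_2-a_1+\mu}\,\big/\,(\sqrt{a_2-a_1+\mu}+\sqrt\mu)^2$ and discarding the nonnegative term $\sqrt\mu$ in the denominator gives $(\sqrt{a_2-a_1+\mu}+\sqrt\mu)^2\ge a_2-a_1+\mu$, whence $q_1(a_2+\mu)\le(a_2-a_1+\mu)^{-1/2}\le(a_2-a_1+\alpha)^{-1/2}$. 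This is the crucial step: keeping the full fourth power of $\sqrt{a_2-a_1+\mu}$ (rather than an AM–GM bound) is what preserves the correct $a_2^{-1}$ order of $q_1^2$. For the remaining factor I use $\sqrt\mu\le\sqrt\beta\le\beta/\sqrt\alpha$, valid since $\alpha\le\mu\le\beta$. Multiplying collects the weight into the constant $\beta\big/\big((a_2-a_1+\alpha)\sqrt\alpha\big)$, which gives the stated estimate after taking square roots, and the asymptotics $\sim\sqrt\beta\,\alpha^{-1/4}\,\|\psi\|_{L^2}\,a_2^{-1/2}$ follow from $a_2-a_1+\alpha\sim a_2$ as $a_2\to\infty$.

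The step I expect to require the most care is the bookkeeping of constants in the Plancherel/change-of-variables passage: the Jacobian factor $2p$, the weight $q_1$, and the normalization built into $V$ must combine so that the effective weight is exactly $\sqrt\mu\,q_1(a_2+\mu)^2$ with no spurious factor surviving. The pointwise estimates are elementary monotonicity arguments of the same flavor as in the proof of Theorem~\ref{H lower estimate}; the only delicate point there is to choose bounds (in particular the fourth-power bound on the denominator of $q_1$) that simultaneously reproduce the clean closed-form constant and the sharp decay rate $a_2^{-1/2}$.
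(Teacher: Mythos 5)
Your proposal follows essentially the same route as the paper's proof: substitute $p=\xi_2(\lambda)$, view $u_+(t,\cdot)$ as a Fourier transform, bound the $L^2(N_2)$-norm by the $L^2(\R)$-norm, apply Plancherel, and estimate the resulting weight pointwise using that $\mathrm{supp}\,\psi\subset(\alpha,\beta)$. The only cosmetic difference is that you change variables to $\mu=p^2$ before bounding the weight, whereas the paper first bounds $p\,q_1(a_2+p^2)\le\sqrt{\beta}/\sqrt{a_2-a_1+\alpha}$ and only then substitutes back; the constant and the $a_2^{-1/2}$ asymptotics come out identically.
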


\begin{proof}
 In the expression~\eqref{representation} for $u_+$ we substitute
 $p := \xi_2(\lambda)$. This yields
 \[ u_+(t,x) = 2 \int_{p_{\min}}^{p_{\max}} e^{i\sqrt{s_2 + p^2} t}
    q_1(a_2 + p^2) e^{-i p x} (V u_0)_1(a_2 + p^2) p \; d p.
 \]
 Interpreting this integral as a Fourier transform we find by the
 Plancherel theorem
 \[ \| u_+(t, \cdot) \|_{L^2(N_2)} \le \| u_+(t, \cdot) \|_{L^2(\R)} =
    \bigl\| p \mapsto p q_1(a_2 + p^2) (Vu_0)_1(a_2 + p^2)
    \chi_{[p_{\min}, p_{\max}]}(p) \bigl\|_{L^2(\R)}
 \]
 Now, we use that $\mathrm{supp}((Vu_0)_1)$ is contained in the interval
 $[a_2 + \alpha, a_2 + \beta]$, so only the range $\sqrt{\alpha} \le p
 \le \sqrt{\beta}$ is relevant. For these values of $p$ we find
 \[ p q_1(a_2 + p^2) = p
    \frac{\sqrt{a_2 - a_1 + p^2}}{(\sqrt{a_2 - a_1 + p^2} + p)^2} \le
    \frac{p}{\sqrt{a_2 - a_1 + p^2}} \le
    \frac{\sqrt{\beta}}{\sqrt{a_2 - a_1 + \alpha}}
 \]
 So, we have
 \[ \| u_+(t, \cdot) \|_{L^2(N_2)} \le
    \frac{\sqrt{\beta}}{\sqrt{a_2 - a_1 + \alpha}} \bigl\|
    (Vu_0)_1(a_2 + p^2) \bigr\|_{L^2((p_{\min}, p_{\max}))}
 \]
 and substituting back $\lambda = a_2 + p^2$ we obtain
 \[ \bigl\| (Vu_0)_1(a_2 + p^2) \bigr\|_{L^2((p_{\min}, p_{\max}))}^2 =
    \int_{\lambda_{\min}}^{\lambda_{\max}} \bigl| (V u_0)_1 (\lambda)
    \bigr|^2 \frac{d\lambda}{\sqrt{\lambda - a_2}} =
    \int_{a_2 + \alpha}^{a_2 + \beta} |\tilde \psi(\lambda)|^2
    \frac{d\lambda}{\sqrt{\lambda - a_2}}.
 \]
 Setting, finally, $\mu = \lambda - a_2$ we end up with
 \[ \| u_+(t, \cdot) \|_{L^2(N_2)} \le
    \frac{\sqrt{\beta}}{\sqrt{a_2 - a_1 + \alpha}} \Bigl( \int_\alpha^\beta
    |\psi(\mu)|^2 \frac{d \mu}{\sqrt{\mu}} \Bigr)^{1/2} \le
    \frac{\sqrt{\beta}}{\sqrt{a_2 - a_1 + \alpha} \sqrt[4]{\alpha}}
    \| \psi \|_{L^2((\alpha, \beta))}
   \qedhere
 \]
\end{proof}

\begin{theo} \label{energy in cone}
In the setting of Theorem \ref {sol.acc.decay} suppose that
$\psi(\mu) \ge m >0$ for $\mu \in [\alpha',\beta']$ with $\alpha <
\alpha' < \beta' < \beta$. Then we have
\begin{enumerate}
  \item the estimate from below: $\forall \varepsilon > 0 \ \exists t_0 > 0 \ \forall t > t_0
  \quad$
  \begin{equation*}
\bigl|\bigl|u_+(t,\cdot)\bigl|\bigl|_{L^2(I'_t)}^2
\geq
\Bigl( f(a_2,\alpha,\beta)m - \varepsilon \Bigr)^2
\Bigl(
\sqrt{\frac{\beta' }{a_2 + \beta'}}
-\sqrt{\frac{\alpha' }{a_2 + \alpha'}}
\Bigr)
\end{equation*}
where
\begin{equation*}
I'_t :=
    \Biggl[ \
t\sqrt{\frac{\alpha' }{a_2 + \alpha'}}
    \ , \
 t\sqrt{\frac{\beta' }{a_2 + \beta'}}
    \
    \Biggr]
\end{equation*}
and $f$ is defined in Theorem \ref {sol.acc.decay} (i).
%
%
\item the estimate for the ratio of the global and local energy:\\
$\forall \varepsilon > 0 \ \exists t_0 > 0 \ \forall t > t_0$
\begin{align*}
    \frac{
\bigl|\bigl|u_+(t,\cdot)\bigl|\bigl|_{L^2(N_2)}
    }{
\bigl|\bigl|u_+(t,\cdot)\bigl|\bigl|_{L^2(I'_t)}
    }
& \leq
\frac{\sqrt{\beta}\  \| \psi \|_{L^2((\alpha, \beta))}
}{\sqrt{a_1-a_2+\alpha}\sqrt[4]{\alpha}}
\Bigl( f(a_2,\alpha,\beta)m - \varepsilon\Bigr)^{-1}
\Bigl(
\sqrt{\frac{\beta' }{a_2 + \beta'}}
-
\sqrt{\frac{\alpha' }{a_2 + \alpha'}}
\Bigr)^{-1/2}
\end{align*}
  \item and
\begin{align*}
  &
\limsup_{t\rightarrow \infty}
   \frac{
\bigl|\bigl|u_+(t,\cdot)\bigl|\bigl|_{L^2(N_2)}
    }{
\bigl|\bigl|u_+(t,\cdot)\bigl|\bigl|_{L^2(I'_t)}
    }\\
& \leq
\frac{\sqrt{\beta}\  \| \psi \|_{L^2((\alpha, \beta))}
}{\sqrt{a_1-a_2+\alpha}\sqrt[4]{\alpha}}
\Bigl( f(a_2,\alpha,\beta)m \Bigr)^{-1}
\Bigl(
\sqrt{\frac{\beta' }{a_2 + \beta'}}
-
\sqrt{\frac{\alpha' }{a_2 + \alpha'}}
\Bigr)^{-1/2}
\\
&\sim
(2\pi)^{-1/2}m^{-1}\beta^{1/2}\alpha^{-3/4}
(\sqrt{\beta'}-\sqrt{\alpha'})^{-1/2}
\| \psi \|_{L^2((\alpha, \beta))}
\hbox{ as } a_2 \rightarrow \infty
\end{align*}

\end{enumerate}
\end{theo}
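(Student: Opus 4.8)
The plan is to assemble all three assertions directly from two estimates already in hand: the pointwise lower bound for $|u_+|$ inside the cone from Theorem \ref{H lower estimate}(ii) and the global upper bound for $\|u_+(t,\cdot)\|_{L^2(N_2)}$ from Theorem \ref{energy on branch}. No new analysis is required; the content lies in the cone geometry, the handling of the $\varepsilon$--$t_0$ quantifiers and the $a_2\to\infty$ asymptotics.

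For (i) I would first note that $I'_t$ is exactly the spatial slice of the cone with bounds $\alpha',\beta'$: the condition $\sqrt{(a_2+\beta')/\beta'}\le t/x\le\sqrt{(a_2+\alpha')/\alpha'}$ rearranges to $t\sqrt{\alpha'/(a_2+\alpha')}\le x\le t\sqrt{\beta'/(a_2+\beta')}$, i.e.\ $x\in I'_t$. Hence, for every $\varepsilon>0$ and $t>t_0(\varepsilon)$, Theorem \ref{H lower estimate}(ii) gives $|u_+(t,x)|\ge(f(a_2,\alpha,\beta)m-\varepsilon)t^{-1/2}$ for all $x\in I'_t$. Squaring and integrating over $I'_t$ yields
\[
\|u_+(t,\cdot)\|_{L^2(I'_t)}^2\ge(f(a_2,\alpha,\beta)m-\varepsilon)^2\,t^{-1}\,|I'_t|,
\]
and since $|I'_t|=t\bigl(\sqrt{\beta'/(a_2+\beta')}-\sqrt{\alpha'/(a_2+\alpha')}\bigr)$ the factor $t^{-1}$ cancels, giving (i) verbatim.

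For (ii) I would divide the global upper bound of Theorem \ref{energy on branch} (numerator) by the square root of the estimate in (i) (denominator); this produces the claimed quotient carrying the factor $(f(a_2,\alpha,\beta)m-\varepsilon)^{-1}$. Assertion (iii) then follows by passing to $\limsup_{t\to\infty}$: the right-hand side of (ii) is independent of $t$ for $t>t_0(\varepsilon)$, so the $\limsup$ is bounded by it for each fixed $\varepsilon>0$; letting $\varepsilon\to0$ afterwards replaces $f(a_2,\alpha,\beta)m-\varepsilon$ by $f(a_2,\alpha,\beta)m$. For the trailing asymptotic as $a_2\to\infty$ I would insert $f(a_2,\alpha,\beta)\sim\sqrt{2\pi\alpha}\,a_2^{-1/4}$ from Theorem \ref{H lower estimate}(i), use $\sqrt{a_2-a_1+\alpha}\sim a_2^{1/2}$, and expand $\sqrt{\beta'/(a_2+\beta')}-\sqrt{\alpha'/(a_2+\alpha')}\sim(\sqrt{\beta'}-\sqrt{\alpha'})a_2^{-1/2}$. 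The powers combine as $a_2^{-1/2}\cdot a_2^{1/4}\cdot a_2^{1/4}=a_2^{0}$, confirming the $a_2$-independence, and collecting the prefactors reproduces $(2\pi)^{-1/2}m^{-1}\beta^{1/2}\alpha^{-3/4}(\sqrt{\beta'}-\sqrt{\alpha'})^{-1/2}\|\psi\|_{L^2((\alpha,\beta))}$.

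The interval-length computation and the constant-chasing are routine; the one point needing genuine care is the order of quantifiers between (ii) and (iii). Since Theorem \ref{H lower estimate}(ii) only furnishes its bound for $t>t_0(\varepsilon)$, one must fix $\varepsilon$ first, take the $\limsup$ against the $\varepsilon$-dependent constant, and only then send $\varepsilon\to0$; these limits may not be interchanged, but because the bound in (ii) is monotone in $\varepsilon$ the final passage is harmless. I would therefore read (iii) as the limit of the family of bounds in (ii) as $\varepsilon\to0$ rather than as a single estimate obtained by putting $\varepsilon=0$ at the outset.
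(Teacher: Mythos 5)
Your proposal is correct and follows essentially the same route as the paper: part (i) is the bound $\|u_+(t,\cdot)\|_{L^2(I'_t)}^2\ge\bigl(\inf_{x\in I'_t}|u_+(t,x)|\bigr)^2\,|I'_t|$ combined with Theorem \ref{H lower estimate}(ii), part (ii) divides the bound of Theorem \ref{energy on branch} by the square root of (i), and part (iii) passes to the $\limsup$ and uses the asymptotics of $f$ from Theorem \ref{H lower estimate}(i). Your added care with the $\varepsilon$--$t_0$ quantifiers and the explicit $a_2\to\infty$ bookkeeping only make explicit what the paper leaves implicit.
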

\begin{proof}

\noindent \underline{(i)}:

\noindent Follows from
\begin{equation*}
    \bigl|\bigl|u_+(t,\cdot)\bigl|\bigl|_{L^2(I'_t)}^2
\geq
\Bigl(\inf_{x \in I'_t}\bigl|u_+(t,\cdot)\bigl|\Bigr)^2
\cdot\bigl|I'_t\bigl|
\end{equation*}
and Theorem \ref{H lower estimate} (ii).

\noindent \underline{(ii)}:

\noindent Follows from (i) and Theorem \ref{energy on branch}.

\noindent \underline{(iii)}:

\noindent Direct consequence of (ii) and Theorem \ref{H lower
estimate} (i).
\end{proof}
\begin{theo} \label{energy in cone upper}
Consider the setting of Theorem \ref{sol.acc.decay}.
\begin{enumerate}
  \item
$\forall \varepsilon > 0 \ \exists t_1 > 0 \ \forall t > t_1 :$
 \[
\bigl| u_+(t,x) \bigr| \le \bigl(g(a_1,a_2,\beta)  + \varepsilon
\bigr) \ t^{-1/2}
 \]
for all $(t,x)$ satisfying
 \[ \sqrt{\frac{a_2 + \beta}{\beta}} \le \frac tx \le
    \sqrt{\frac{a_2 + \alpha}{\alpha}} \ .
 \]
  \item
 $ \forall \varepsilon > 0 \ \exists t_1 > 0 \ \forall t > t_1 :$
\begin{align*}
\bigl|\bigl|u_+(t,\cdot)\bigl|\bigl|_{L^2(I'_t)}^2&\le
\bigl(g(a_1,a_2,\beta)  + \varepsilon \bigr)^2 \ \Bigl(
\sqrt{\frac{\beta }{a_2 + \beta}}
-
\sqrt{\frac{\alpha }{a_2 + \alpha}}
\Bigr) \\
\end{align*}
  \item
\begin{align*}
\liminf_{t \rightarrow \infty}
\bigl|\bigl|u_+(t,\cdot)\bigl|\bigl|_{L^2(I'_t)}^2
&\le
 g(a_1,a_2,\beta)^2 \ \Bigl(
\sqrt{\frac{\beta }{a_2 + \beta}}
-
\sqrt{\frac{\alpha }{a_2 + \alpha}}
\Bigr) \\
&\sim
2\pi \beta(\sqrt{\beta}-\sqrt{\alpha}) \ a_2^{-1}
 \hbox{ as } a_2 \rightarrow \infty
\end{align*}
\end{enumerate}
\end{theo}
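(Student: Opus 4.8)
The plan is to treat Theorem~\ref{energy in cone upper} as the upper-bound companion to the lower bounds of Theorem~\ref{H lower estimate} and Theorem~\ref{energy in cone}, reusing the same stationary-phase expansion. For part~(i) I would start from the estimate $|u_+(t,x) - H(t,x,u_0)\,t^{-1/2}| \le C(\psi,\alpha,\beta)\,t^{-1}$ of Theorem~\ref{sol.acc.decay}, valid on the cone \eqref{cone-t-x}, together with the pointwise bound $|H(t,x,u_0)| \le g(a_1,a_2,\beta)$ from the same theorem. The triangle inequality gives $|u_+(t,x)| \le g(a_1,a_2,\beta)\,t^{-1/2} + C\,t^{-1}$, and since $C\,t^{-1} = (C\,t^{-1/2})\,t^{-1/2}$ with $t^{-1/2}\to 0$, for each $\varepsilon>0$ there is a $t_1$ (uniform in $x$, as $C$ is independent of $x$) beyond which the remainder is $\le \varepsilon\,t^{-1/2}$. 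This yields $|u_+(t,x)| \le (g(a_1,a_2,\beta)+\varepsilon)\,t^{-1/2}$ on the cone, mirroring exactly the argument in Theorem~\ref{H lower estimate}~(ii).

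For part~(ii) the key preliminary observation is that the integration interval $I'_t$ lies inside the $x$-range of part~(i). Indeed, the cone condition $\sqrt{(a_2+\beta)/\beta}\le t/x \le \sqrt{(a_2+\alpha)/\alpha}$ is equivalent to $x \in [\,t\sqrt{\alpha/(a_2+\alpha)},\, t\sqrt{\beta/(a_2+\beta)}\,]$, whereas $I'_t = [\,t\sqrt{\alpha'/(a_2+\alpha')},\, t\sqrt{\beta'/(a_2+\beta')}\,]$. Since $s\mapsto s/(a_2+s)$ is strictly increasing and $\alpha<\alpha'<\beta'<\beta$, one has $I'_t \subset [\,t\sqrt{\alpha/(a_2+\alpha)},\, t\sqrt{\beta/(a_2+\beta)}\,]$, so the bound of part~(i) is available for every $x\in I'_t$ once $t>t_1$. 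I would then estimate $\|u_+(t,\cdot)\|_{L^2(I'_t)}^2 \le (g(a_1,a_2,\beta)+\varepsilon)^2\,t^{-1}\,|I'_t|$, where $|I'_t| = t\bigl(\sqrt{\beta'/(a_2+\beta')}-\sqrt{\alpha'/(a_2+\alpha')}\bigr)$. The factor $t$ cancels against $t^{-1}$, and replacing the primed bounds by the unprimed ones only enlarges the remaining difference (again by monotonicity of $s\mapsto s/(a_2+s)$, which makes the unprimed difference the larger one), giving the stated bound in part~(ii).

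Part~(iii) follows by taking $\liminf_{t\to\infty}$ in~(ii): since the right-hand side is independent of $t$ for $t>t_1$, we obtain $\liminf_{t\to\infty}\|u_+(t,\cdot)\|_{L^2(I'_t)}^2 \le (g(a_1,a_2,\beta)+\varepsilon)^2\bigl(\sqrt{\beta/(a_2+\beta)}-\sqrt{\alpha/(a_2+\alpha)}\bigr)$ for every $\varepsilon>0$, and letting $\varepsilon\to 0$ removes it. For the asymptotics as $a_2\to\infty$ I would combine $g(a_1,a_2,\beta)\sim\sqrt{2\pi\beta}\,a_2^{-1/4}$ from Theorem~\ref{sol.acc.decay}, whence $g^2\sim 2\pi\beta\,a_2^{-1/2}$, with the expansions $\sqrt{\beta/(a_2+\beta)}\sim\sqrt{\beta}\,a_2^{-1/2}$ and $\sqrt{\alpha/(a_2+\alpha)}\sim\sqrt{\alpha}\,a_2^{-1/2}$; their difference is $\sim(\sqrt{\beta}-\sqrt{\alpha})\,a_2^{-1/2}$, and the product yields $2\pi\beta(\sqrt{\beta}-\sqrt{\alpha})\,a_2^{-1}$, as claimed.

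The only genuinely delicate point is the geometric bookkeeping in part~(ii): one must verify the inclusion $I'_t\subset[\,t\sqrt{\alpha/(a_2+\alpha)},\,t\sqrt{\beta/(a_2+\beta)}\,]$ so that the pointwise bound of part~(i) is legitimately available on all of $I'_t$, and then check that passing from the primed interval length to the unprimed difference $\sqrt{\beta/(a_2+\beta)}-\sqrt{\alpha/(a_2+\alpha)}$ goes in the bound-preserving direction. Both reduce to the monotonicity of $s\mapsto s/(a_2+s)$, so there is no serious analytic obstacle; the entire substance of the result is inherited from the stationary-phase expansion recalled in Theorem~\ref{sol.acc.decay}.
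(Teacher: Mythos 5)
Your proposal is correct and follows essentially the same route as the paper: part~(i) by the triangle inequality applied to the expansion and the bound $|H|\le g$ from Theorem~\ref{sol.acc.decay}, part~(ii) by bounding the squared $L^2$-norm by the squared supremum times $|I'_t|$, and part~(iii) by passing to the limit and using $g(a_1,a_2,\beta)\sim\sqrt{2\pi\beta}\,a_2^{-1/4}$. You in fact supply details the paper leaves implicit, namely the inclusion $I'_t\subset[\,t\sqrt{\alpha/(a_2+\alpha)},\,t\sqrt{\beta/(a_2+\beta)}\,]$ and the monotonicity of $s\mapsto s/(a_2+s)$ needed to replace the primed difference by the unprimed one.
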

\begin{proof}

\noindent \underline{(i)}:

\noindent Theorem \ref{sol.acc.decay} implies that
$\forall \varepsilon > 0 \ \exists t_1 > 0 \ \forall t > t_1 :$
\begin{align*}
\bigl|u_+(t,x)\bigr|
&\le \bigl|u_+(t,x)- H(t,x,u_0)t^{-1/2}+H(t,x,u_0)t^{-1/2} \bigr| \\
&\le C(\psi,\alpha,\beta) t^{-1} + \bigl|H(t,x,u_0) \bigr|t^{-1/2} \\
& \le \bigl(g(a_1,a_2,\beta\bigr)  + \varepsilon) \ t^{-1/2}
\end{align*}
for $(t,x)$ in the cone indicated there.

\noindent \underline{(ii)}:

\noindent
 Follows from estimating the square of the $L^2$-norm against the square of the maximum of the
function (using (i)) times the length of the integration interval.

\noindent \underline{(iii)}:

\noindent Direct consequence of (ii).

\end{proof}

%
%

\end{document}